\newtheorem{theorem}{Theorem}
\newtheorem{lemma}[theorem]{Lemma}
\newtheorem{corollary}[theorem]{Corollary}
\def\nin{\relax\hbox{$/\kern-.7em{\rm \in\,}$}}
\begin{document}

\begin{center}
\Large{On 
Algebras of Hausdorff Operators on the Real Line}
\end{center}

\bigskip

\begin{center}
E. Liflyand
\end{center}
\begin{center}
Department of Mathematics, Bar-Ilan University, Ramat-Gan 52900, Israel; \quad e-mail: liflyand@gmail.com
\end{center}

\begin{center}
and
\end{center}

\begin{center}
A. Mirotin
\end{center}
\begin{center}
Department of Mathematics and Programming Technologies, Francisk
Skorina Gomel State University, Gomel, Belarus; \quad e-mail: amirotin@yandex.by
\end{center}

\bigskip

{\bf Abstract.} The aim of this work is  to derive a symbol calculus on $L^2(\mathbb{R})$ for one-dimensional Hausdorff operators
in apparently the most general form.

\bigskip

{\bf MSC 2020}: Primary 46B25; Secondary 42A38, 30A99

\medskip

{\it Keywords}: Hausdorff operator; Fourier transform; symbol calculus; commutative Banach algebra; holomorphic functions; spectrum

\bigskip

\section{Introduction and Preliminaries}

Modern theory and growing interest in Hausdorff operators can be traced back to \cite{Georgakis} and, especially, to \cite{LM}.
One of the simplest properties of such an operator in one of its traditional forms
\begin{equation}\label{ho1}
\mathcal{H}_Kf(x):=\int_{\mathbb{R}}K(u)f(\frac{x}{u})\,du
\end{equation}
is the fact that it is bounded on $L^2(\mathbb{R})$ if $K(u)|u|^{\frac{1}{2}}\in L^1(\mathbb{R})$.

However, there is a feeling that the study of this traditional form is somewhat exhausted. A more general form of the Hausdorff operator
has been recently suggested in \cite{Kuang} and \cite{Kuang1}; we present it in a slightly different manner:

\begin{equation}\label{ho2}
\mathcal{H}_{K,a}f(x):=\int_{\mathbb{R}}K(u)f({x}a(u))\,du,
\end{equation}
where the given functions $K$ and $a$ are measurable and $a(u)\ne 0$ for a.~e. $u\in \mathbb{R}$. Similarly, it is bounded on $L^2(\mathbb{R})$ if
\begin{equation}\label{ho3}
\frac{K(u)}{|a(u)|^{\frac{1}{2}}}\in L^1(\mathbb{R}).
\end{equation}
This follows by straightforward application of generalized Minkowski's inequality.

To make the setting meaningful, we shall assume that $a$ is an odd function, monotonously decreasing to zero on $\mathbb{R}_+$.
It may have a singularity at the origin. In other words, this function mimics the usual $\frac1u$.
  Throughout, we shall also assume that  the condition  (\ref{ho3}) holds.

The aim of this work is  to derive a symbol calculus for one-dimensional Hausdorff operators on $L^2(\mathbb{R})$ of the form (\ref{ho2}).
The notion of a symbol for (generalized) multidimensional Hausdorff operators was introduced in  \cite{faa} and extended in \cite{Forum}.
In our case, the construction of a symbol induces the map
\begin{equation}\label{map}
\mathrm{Smb}:\mathcal{H}_{K,a}\mapsto \Phi, \mathcal{A}_a\to \mathrm{Mat}_2(C_0(\mathbb{R})),
\end{equation}
which is injective and multiplicative. Here
\[
\mathcal{A}_a:=\left\{\mathcal{H}_{K,a}:  \frac{K(u)}{|a(u)|^{\frac{1}{2}}}\in L^1(\mathbb{R})\right\},
\]
$C_0(\mathbb{R})$ stands for the algebra of continuous functions on $\mathbb{R}$ vanishing at infinity,
and $\mathrm{Mat}_2(C_0(\mathbb{R})$ denotes the algebra of matrices of order $2$ with the entries in $C_0(\mathbb{R})$).

It is noteworthy that in some important  cases (see, e.g.,  (\ref{ho1})) the symbol of an one-dimensional Hausdorff operator in a sense of \cite{faa}
is closely related to the notion of a symbol of an integral operator with  homogeneous kernel introduced and studied in \cite{KS}.

There are two main results in this work, Theorems 2.2 and 3.4. We prove and discuss them in the two following sections, respectively.

\bigskip

\section{The algebra $\mathcal{A}_a$}

We begin with a property of the map defined in (\ref{map}).

\begin{lemma}\label{lm1} The map $\mathrm{Smb}: \mathcal{A}_a\to \mathrm{Mat}_2(C_0(\mathbb{R}))$
is an isometry, if we endow the algebra $ \mathrm{Mat}_2(C_0(\mathbb{R}))$ with the norm $\|\Phi\|=\sup_{s\in \mathbb{R}}\|\Phi(s)\|_{op}$.
\end{lemma}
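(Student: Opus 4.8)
The plan is to realize $\mathrm{Smb}(\mathcal{H}_{K,a})$ as the matrix symbol of a genuine multiplication operator that is unitarily equivalent to $\mathcal{H}_{K,a}$, and then simply read off the operator norm. First I would split $L^2(\mathbb{R})$ into two copies of $L^2(\mathbb{R}_+)$ by means of the even/odd decomposition (equivalently, the restrictions to $\mathbb{R}_+$ and $\mathbb{R}_-$). Since $a$ is odd, the dilation $f(x)\mapsto f(xa(u))$ acts on this splitting either diagonally or with a single controlled off-diagonal term, so $\mathcal{H}_{K,a}$ takes a $2\times 2$ block form whose entries are superpositions $\int_{\mathbb{R}}K(u)\,D_{|a(u)|}\,du$ (some carrying the extra weight $\mathrm{sgn}\,u$) of the positive dilations $(D_cg)(x)=g(cx)$, $c>0$. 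Because $\|D_c\|_{B(L^2(\mathbb{R}_+))}=|c|^{-1/2}$, hypothesis (\ref{ho3}) makes each of these superpositions convergent in operator norm. I would then apply on each half-line the Mellin transform $V$ (equivalently, the substitution $x=e^t$ followed by the Fourier transform in $t$); $V$ is unitary and satisfies $VD_cV^{-1}=$ (multiplication by $c^{-1/2+is}$) for $c>0$. Consequently, with $W$ the resulting unitary on $L^2(\mathbb{R})\cong L^2(\mathbb{R},\mathbb{C}^2)$, one gets $W\mathcal{H}_{K,a}W^{-1}=M_\Phi$, the operator of multiplication by the matrix-valued function $\Phi(s)$ whose entries have the form $\int_{\mathbb{R}}K(u)|a(u)|^{-1/2+is}\,du$; this $\Phi$ is precisely $\mathrm{Smb}(\mathcal{H}_{K,a})$.

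The delicate point in this first step is the interchange of the operator-valued integral $\int_{\mathbb{R}}K(u)(\cdots)\,du$ with the conjugation by $V$. I would handle it by viewing $u\mapsto K(u)D_{|a(u)|}$ as a Bochner-integrable $B(L^2(\mathbb{R}_+))$-valued function — integrability being exactly (\ref{ho3}) — and invoking that $T\mapsto VTV^{-1}$ is bounded and linear, together with the elementary intertwining identity for a single dilation. The resulting scalar entries $\int_{\mathbb{R}}K(u)|a(u)|^{-1/2+is}\,du$ are, after the substitution $t=\log|a(u)|$ (legitimate by the strict monotonicity of $a$), Fourier transforms of $L^1$ functions, hence members of $C_0(\mathbb{R})$ by the Riemann--Lebesgue lemma; so indeed $\Phi\in\mathrm{Mat}_2(C_0(\mathbb{R}))$, which is also what makes the codomain in (\ref{map}) the correct one.

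It remains to compare norms, and here there is no real obstacle. Since $W$ is unitary, $\|\mathcal{H}_{K,a}\|_{B(L^2(\mathbb{R}))}=\|M_\Phi\|$. For the multiplication operator by a bounded matrix-valued function one has the standard identity $\|M_\Phi\|=\mathop{\mathrm{ess\,sup}}_{s\in\mathbb{R}}\|\Phi(s)\|_{op}$: the inequality ``$\le$'' comes from $\|\Phi(s)f(s)\|\le\|\Phi(s)\|_{op}\|f(s)\|$ followed by integration, and ``$\ge$'' from testing $M_\Phi$ on vectors of the form $f(s)=\mathbf{1}_B(s)\,v$, where $B$ is a small ball about a point at which $\|\Phi(s)\|_{op}$ is close to its supremum and $v$ is a near-maximizing unit vector for $\Phi$ at the center of $B$ (continuity of $\Phi$ makes this selection elementary). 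Finally, since $\Phi$ has entries in $C_0(\mathbb{R})$, the scalar function $s\mapsto\|\Phi(s)\|_{op}$ is continuous and vanishes at infinity, so its essential supremum equals its supremum (which is attained). Chaining these equalities gives $\|\mathrm{Smb}(\mathcal{H}_{K,a})\|=\sup_{s}\|\Phi(s)\|_{op}=\mathop{\mathrm{ess\,sup}}_{s}\|\Phi(s)\|_{op}=\|M_\Phi\|=\|\mathcal{H}_{K,a}\|_{B(L^2(\mathbb{R}))}$, i.e.\ $\mathrm{Smb}$ is an isometry. The only genuinely technical part, as noted, is the diagonalization of $\mathcal{H}_{K,a}$ in the first paragraph; once it is identified with $M_\Phi$, the isometry is automatic.
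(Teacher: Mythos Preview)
Your proof is correct and follows the same two-step route as the paper: first the unitary equivalence $\mathcal{H}_{K,a}\cong M_\Phi$ (your Mellin diagonalization of the dilation blocks), then the identity $\|M_\Phi\|=\sup_{s}\|\Phi(s)\|_{op}$. The only difference is that the paper obtains both facts by citing \cite{faa} and \cite{Forum}, whereas you reprove them from scratch; conceptually the arguments coincide.
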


Here  $\|\cdot\|_{op}$ stands for the operator norm of a matrix as the norm of the operator of multiplication by this matrix.

\begin{proof} Let $ M_{\Phi}$ denote the operator of multiplication by the matrix function  $\Phi\in \mathrm{Mat}_2(C_0(\mathbb{R}))$
in the space of vector valued functions $L^2(\mathbb{R}, \mathbb{C}^2)$. It is known from \cite{faa} and  \cite{Forum} that the map
$\mathcal{H}_{K,a}\mapsto M_{\Phi}$ is an isometry (with respect to operator norms)
 if $\Phi=\mathrm{Smb}(\mathcal{H}_{K,a})$. On the other hand,  $\| M_{\Phi}\|=\|\Phi\|$ by \cite[Corollary 3]{Forum}.
\end{proof}

We are now in a position to present our first main result.

\begin{theorem}\label{thm1} The set $\mathcal{A}_a$
is a non-closed  commutative subalgebra of the algebra $\mathcal{L}(L^2(\mathbb{R}))$ of bounded operators on $L^2(\mathbb{R})$  without unit.
\end{theorem}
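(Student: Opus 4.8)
The plan is to verify the three assertions separately, reducing everything to properties of the symbol map $\mathrm{Smb}$ established in Lemma~\ref{lm1} and in \cite{faa,Forum}. First I would show that $\mathcal{A}_a$ is a linear subspace of $\mathcal{L}(L^2(\mathbb{R}))$: linearity in $K$ is immediate from the defining integral \eqref{ho2}, and the condition $K/|a|^{1/2}\in L^1$ is preserved under linear combinations since it is an $L^1$-membership condition. That $\mathcal{A}_a\subseteq\mathcal{L}(L^2(\mathbb{R}))$ is exactly the boundedness statement quoted after \eqref{ho3}.

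Next I would establish closure under composition together with commutativity. The cleanest route is to transport the question through $\mathrm{Smb}$: by Lemma~\ref{lm1} (and the multiplicativity of $\mathrm{Smb}$ asserted in the Introduction around \eqref{map}), composing two Hausdorff operators $\mathcal{H}_{K_1,a}$ and $\mathcal{H}_{K_2,a}$ corresponds to the pointwise product of their matrix symbols in $\mathrm{Mat}_2(C_0(\mathbb{R}))$. One then checks directly that this product is again the symbol of some $\mathcal{H}_{K,a}\in\mathcal{A}_a$ — concretely, by computing $\mathcal{H}_{K_1,a}\mathcal{H}_{K_2,a}$ as an iterated integral, using the substitution adapted to the dilation structure $f\mapsto f(x\,a(u))$, one finds it equals $\mathcal{H}_{K,a}$ with $K$ obtained from a convolution-type combination of $K_1,K_2$ (against the multiplicative structure induced by $a$), and the integrability condition \eqref{ho3} for this new $K$ follows from Minkowski's inequality applied to the $L^1$ norms of $K_i/|a|^{1/2}$. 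Commutativity then follows because the symbol algebra entries, being scalar $C_0(\mathbb{R})$-valued functions multiplied pointwise (or because the composed-kernel formula is symmetric in $K_1,K_2$), commute; alternatively one reads commutativity off the symmetry of the explicit formula for $K$. I expect the bookkeeping in this composition computation — getting the change of variables right when $a$ is merely an odd decreasing function with a possible singularity at $0$, rather than $1/u$ — to be the main obstacle, and the step where care is genuinely needed.

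Then I would argue $\mathcal{A}_a$ has no unit. Since $\mathrm{Smb}$ is an injective algebra homomorphism, a unit of $\mathcal{A}_a$ would map to a unit of the image subalgebra inside $\mathrm{Mat}_2(C_0(\mathbb{R}))$, i.e.\ its symbol would have to act as the identity matrix under pointwise multiplication on the range of the symbol map. But symbols of elements of $\mathcal{A}_a$ have entries in $C_0(\mathbb{R})$, which vanish at infinity, so no such symbol can equal (or act as) the constant identity matrix $I$; equivalently, the operator norm of $\mathcal{H}_{K,a}$ equals $\sup_s\|\mathrm{Smb}(\mathcal{H}_{K,a})(s)\|_{op}$, and if $\mathcal{H}_{K,a}$ were a unit it would fix every element, forcing its symbol to be $I$ on the (dense enough) set of attained symbol values, contradicting decay at infinity. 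I would spell this out using the isometry of Lemma~\ref{lm1}.

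Finally, non-closedness: $\mathcal{A}_a$ is isometric, via $\mathrm{Smb}$, to a subalgebra of $\mathrm{Mat}_2(C_0(\mathbb{R}))$, so if it were closed it would be a commutative Banach algebra without unit whose Gelfand-type structure is constrained; more directly, I would exhibit a sequence $\mathcal{H}_{K_n,a}\in\mathcal{A}_a$ converging in operator norm to a bounded operator not of this form — for instance, choosing $K_n$ so that $K_n/|a|^{1/2}$ is an $L^1$-Cauchy sequence whose limit fails \eqref{ho3} (this is possible because the map $K\mapsto\mathcal{H}_{K,a}$ factors through an $L^1$-to-operator-norm estimate that is not bounded below, so the image is not complete) or, alternatively, by observing that the closure would be a unital-free commutative Banach algebra strictly containing $\mathcal{A}_a$ and pointing to a specific operator (a limit of truncations) in the closure but outside $\mathcal{A}_a$. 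The cleanest version is the first: produce an explicit $L^1$-Cauchy sequence $g_n=K_n/|a|^{1/2}$ with limit $g\notin L^1$-image condition failing in the relevant sense, whose associated operators still converge in norm by the quoted boundedness estimate, and note the limit operator cannot lie in $\mathcal{A}_a$ by injectivity of $\mathrm{Smb}$ combined with a symbol computation.
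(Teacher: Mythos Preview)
Your treatment of linearity, closure under composition, commutativity, and the absence of a unit is essentially the paper's approach. The only difference worth noting is that for closure under composition the paper does not attempt the change-of-variables you flag as ``the main obstacle''; instead it works entirely on the symbol side. The entries of $\mathrm{Smb}(\mathcal{H}_{K,a})$ are $\varphi_\pm=\widehat{K_\pm}$ with $K_\pm(t)=K(\pm e^{-t})e^{-t}/|a(e^{-t})|^{1/2}\in L^1(\mathbb{R})$, so the entries of the matrix product $\Phi\Psi$ are sums of products $\widehat{K_\pm}\,\widehat{L_\pm}=(K_\pm\ast L_\pm)^\wedge$, automatically in the Wiener algebra $W_0(\mathbb{R})$. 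One then reads off $Q$ from $Q_\pm$ via the same substitution. This sidesteps any delicate change of variables involving~$a$.

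Your non-closedness argument, however, does not work as written. You propose to take $g_n=K_n/|a|^{1/2}$ Cauchy in $L^1$ with limit $g$ ``failing \eqref{ho3}''. But if $(g_n)$ is Cauchy in $L^1$ then its limit $g$ \emph{is} in $L^1$, and setting $K=g\,|a|^{1/2}$ gives $K/|a|^{1/2}=g\in L^1$, so \eqref{ho3} holds and the limiting operator lies in $\mathcal{A}_a$. The parenthetical reasoning (``the $L^1$-to-operator-norm estimate is not bounded below, so the image is not complete'') is also not a valid deduction: lack of a lower bound shows the map is not a topological isomorphism onto its range, but not that the range fails to be closed.

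The missing idea is that, via Lemma~\ref{lm1}, the operator norm of $\mathcal{H}_{K,a}$ equals the sup norm of the symbol, whose entries are Fourier transforms $\widehat{K_\pm}$. The Fourier transform maps $L^1(\mathbb{R})$ into $C_0(\mathbb{R})$ with range the Wiener algebra $W_0(\mathbb{R})$, and the nontrivial fact you need is that $W_0(\mathbb{R})\subsetneq C_0(\mathbb{R})$ and is not closed in the sup norm. Choose $K_n$ so that $\widehat{K_{n+}}$ converges uniformly to some $h\in C_0(\mathbb{R})\setminus W_0(\mathbb{R})$. Then the symbols, hence by Lemma~\ref{lm1} the operators $\mathcal{H}_{K_n,a}$, form a Cauchy sequence in operator norm. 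If the limit were some $\mathcal{H}_{L,a}\in\mathcal{A}_a$, Lemma~\ref{lm1} would force $\widehat{K_{n+}}\to\widehat{L_+}\in W_0(\mathbb{R})$ uniformly, contradicting $h\notin W_0(\mathbb{R})$. This is precisely the paper's argument.
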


\begin{proof} Straightforward calculations yield the commutativity of $\mathcal{A}_a$.

Further, the matrix symbol $\mathrm{Smb}(\mathcal{H}_{K,a})=\Phi$ of  an  operator $\mathcal{H}_{K,a}$
can be defined as in \cite{faa} by
\begin{equation}\label{Phi}
\Phi=
\begin{pmatrix} \varphi_+&\varphi_-\\ \varphi_-&\varphi_+
\end{pmatrix},
\end{equation}
where in our case
\begin{equation}\label{phi+}
\varphi_+(s)=\int_{(0,\infty)} \frac{K(u)|u|^{is}}{|a(u)|^{\frac{1}{2}}}\,du=\widehat{K_+}(s),
\end{equation}
\begin{equation}\label{phi-}
\varphi_-(s)=\int_{(-\infty,0)} \frac{K(u)|u|^{is}}{|a(u)|^{\frac{1}{2}}}\,du=\widehat{K_-}(s),
\end{equation}
with $K_{\pm}(t):=\frac{K(\pm e^{-t})e^{-t}}{|a(e^{-t})|^{\frac{1}{2}}}\,\in L^1(\mathbb{R})$  (the ``hat'' stands for the Fourier transform).

Since the map (\ref{map}) is an isometry (and therefore, injective) and multiplicative, to prove that $\mathcal{A}_a$ is an algebra,
it suffices to show that the product of two symbols is also a symbol. More precisely, it suffices to show that
if  $\mathrm{Smb}(\mathcal{H}_{K,a})=\Phi$ and $\mathrm{Smb}(\mathcal{H}_{L,a})=\Psi$,
then $\Phi\Psi=\mathrm{Smb}(\mathcal{H}_{Q,a})$ for some $\mathcal{H}_{Q,a}\in \mathcal{A}_a$.

But
\[
\Phi\Psi=
\begin{pmatrix} \varphi_+&\varphi_-\\ \varphi_-&\varphi_+,
\end{pmatrix}
\begin{pmatrix} \psi_+&\psi_-\\ \psi_-&\psi_+
\end{pmatrix}=
\begin{pmatrix} \varphi_+\psi_+ +\varphi_-\psi_-&\varphi_+\psi_- +\varphi_-\psi_+\\ \varphi_+\psi_- +\varphi_-\psi_+&\varphi_+\psi_+ +\varphi_-\psi_-
\end{pmatrix}
\]
\[
=\begin{pmatrix} ({K_+}\ast {L_+} +{K_-}\ast{L_-})^{\wedge}&({K_+}\ast{L_-} +{K_-}\ast{L_+})^{\wedge}\\ ({K_+}\ast{L_-} +{K_-}\ast{L_+})^{\wedge}&({K_+}\ast{L_+} +{K_-}\ast{L_-})^{\wedge}
\end{pmatrix},
\]
where $\ast$ denotes the convolution in $L^1(\mathbb{R})$.

Defining the functions $Q_{\pm}$ on $\mathbb{R}$ by
\[
Q_+(t):={K_+}\ast {L_+}(t) +{K_-}\ast{L_-}(t),
\]
\[
Q_-(t):={K_+}\ast {L_-}(t) +{K_-}\ast{L_+}(t),
\]
 we obtain
\[
\Phi\Psi=
\begin{pmatrix} \widehat{Q_+}&\widehat{Q_-}\\ \widehat{Q_-}&\widehat{Q_+}
\end{pmatrix}.
\]

Let $Q$ be a function on $\mathbb{R}$ satisfying
\begin{equation}\label{Q}
 Q_{\pm}(t)=\frac{Q(\pm e^{-t})e^{-t}}{|a(e^{-t})|^{\frac{1}{2}}}.
 \end{equation}
   Then $\Phi\Psi=\mathrm{Smb}(\mathcal{H}_{Q,a})$ by the formulas similar to   (\ref{Phi}),  (\ref{phi+}), and (\ref{phi-}).
Since $Q_{\pm}\in L^1(\mathbb{R})$, we have $\frac{Q(u)}{|a(u)|^{\frac{1}{2}}}\in L^1(\mathbb{R})$. Hence, $\mathcal{H}_{Q,a}\in \mathcal{A}_a$.

Choosing a sequence of kernels $K_n$ satisfying (\ref{ho3}), we enjoy the property that the sequence of Fourier transforms $\widehat{K_{n+}}$
converges to a function  from $C_0(\mathbb{R})\setminus W_0(\mathbb{R})$ uniformly on $\mathbb{R}$. Here $ W_0(\mathbb{R})$ denotes the Wiener
algebra of Fourier transforms of functions from $L^1(\mathbb{R})$; for a comprehensive survey, see \cite{LST}. Assume that the sequence of operators $\mathcal{H}_{K_n,a}$
converges to an operator  $\mathcal{H}_{L,a}$ from $ \mathcal{A}_a$ in the operator norm. Then by Lemma \ref{lm1}, the  sequence  of symbols
$\mathrm{Sym}(\mathcal{H}_{K_n,a})$ converges in the norm $\|\cdot\|_{op}$ to $\mathrm{Sym}(\mathcal{H}_{L,a})$ uniformly on $\mathbb{R}$.
But this implies that $\widehat{K_{n+}}$ converges to  $\widehat{L_{+}}\in W_0(\mathbb{R})$  on $\mathbb{R}$, and we arrive at a contradiction.

Finally,  let $\mathcal{H}_{K,a}=I$, the identity operator for some $\mathcal{H}_{K,a}\in \mathcal{A}_a$. Then $\mathrm{Smb}(\mathcal{H}_{K,a})=E_2$
(the unit  matrix of order two) and therefore $\widehat{K_+}(s)=1$, which leads to a contradiction.
This completes the proof.
\end{proof}

\begin{corollary} The algebra $\mathcal{A}_a$ is not Banach.
\end{corollary}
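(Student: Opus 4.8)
The plan is to derive this immediately from Theorem~\ref{thm1}. Recall that a Banach algebra is, by definition, a normed algebra whose underlying normed space is complete. In Theorem~\ref{thm1} we established that $\mathcal{A}_a$, as a subalgebra of $\mathcal{L}(L^2(\mathbb{R}))$ equipped with the operator norm, is \emph{not} closed: there exists a sequence $\mathcal{H}_{K_n,a}$ whose symbols $\widehat{K_{n+}}$ converge uniformly to some $g\in C_0(\mathbb{R})\setminus W_0(\mathbb{R})$, and the corresponding operators form a Cauchy sequence in $\mathcal{L}(L^2(\mathbb{R}))$ (by Lemma~\ref{lm1} the symbol map is an isometry, so a uniformly convergent sequence of symbols corresponds to a Cauchy sequence of operators) which, however, has no limit inside $\mathcal{A}_a$.

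First I would spell out the Cauchy property: for $m,n$ large, $\|\mathcal{H}_{K_n,a}-\mathcal{H}_{K_m,a}\|_{\mathcal{L}(L^2)}=\|\mathrm{Smb}(\mathcal{H}_{K_n,a})-\mathrm{Smb}(\mathcal{H}_{K_m,a})\|=\sup_{s}\|\Phi_n(s)-\Phi_m(s)\|_{op}$ by Lemma~\ref{lm1}, and the right-hand side is controlled by $\sup_s|\widehat{K_{n+}}(s)-\widehat{K_{m+}}(s)|$ since the matrix has the special form \eqref{Phi} with entries built from $\widehat{K_{\pm}}$; uniform convergence of $\widehat{K_{n+}}$ then forces this to zero. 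Next I would observe that if $\mathcal{A}_a$ were complete in the operator norm, this Cauchy sequence would converge to some $\mathcal{H}_{L,a}\in\mathcal{A}_a$; but then, again by Lemma~\ref{lm1}, $\widehat{K_{n+}}\to\widehat{L_+}\in W_0(\mathbb{R})$ uniformly, whereas the limit in $C_0(\mathbb{R})$ is the prescribed $g\notin W_0(\mathbb{R})$, and uniform limits are unique --- contradiction. Hence $\mathcal{A}_a$ is not complete, i.e. not Banach.

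The only genuine point requiring care --- and the one I would state explicitly --- is the \emph{existence} of a sequence $K_n$ with $K_n(u)/|a(u)|^{1/2}\in L^1(\mathbb{R})$ such that $\widehat{K_{n+}}$ converges uniformly on $\mathbb{R}$ to a function in $C_0(\mathbb{R})\setminus W_0(\mathbb{R})$. This is exactly the ingredient invoked in the proof of Theorem~\ref{thm1}, and it rests on the classical fact that $W_0(\mathbb{R})$ is a proper dense subset of $C_0(\mathbb{R})$ (dense by Stone--Weierstrass-type arguments, proper by the existence of continuous functions vanishing at infinity that are not Fourier transforms of $L^1$ functions --- see \cite{LST}): pick $g\in C_0(\mathbb{R})\setminus W_0(\mathbb{R})$, approximate it uniformly by $g_n=\widehat{h_n}$ with $h_n\in L^1(\mathbb{R})$, and set $K_n(u):=h_n(-\ln u)\,|a(u)|^{1/2}/u$ for $u>0$ (and $0$ for $u<0$), so that $K_{n+}=h_n$ and $\widehat{K_{n+}}=g_n\to g$. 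Everything else is a formal unwinding of definitions, so I do not anticipate any serious obstacle; the corollary is essentially a restatement of the non-closedness already proved.

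\begin{proof}
Suppose, for contradiction, that $\mathcal{A}_a$ is a Banach algebra, i.e. complete in the operator norm inherited from $\mathcal{L}(L^2(\mathbb{R}))$. As in the proof of Theorem~\ref{thm1}, choose a sequence of kernels $K_n$ satisfying \eqref{ho3} such that $\widehat{K_{n+}}$ converges uniformly on $\mathbb{R}$ to some $g\in C_0(\mathbb{R})\setminus W_0(\mathbb{R})$; such a sequence exists because $W_0(\mathbb{R})$ is a proper dense subspace of $C_0(\mathbb{R})$ (see \cite{LST}). We may and do take $K_{n-}=0$, so that $\widehat{K_{n-}}=0$ for every $n$. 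By \eqref{Phi}, \eqref{phi+}, and \eqref{phi-}, the symbol of $\mathcal{H}_{K_n,a}$ is
\[
\Phi_n=
\begin{pmatrix}\widehat{K_{n+}}&0\\ 0&\widehat{K_{n+}}\end{pmatrix},
\]
hence $\|\Phi_n(s)-\Phi_m(s)\|_{op}=|\widehat{K_{n+}}(s)-\widehat{K_{m+}}(s)|$ for all $s\in\mathbb{R}$. By Lemma~\ref{lm1},
\[
\|\mathcal{H}_{K_n,a}-\mathcal{H}_{K_m,a}\|=\|\Phi_n-\Phi_m\|=\sup_{s\in\mathbb{R}}|\widehat{K_{n+}}(s)-\widehat{K_{m+}}(s)|\longrightarrow 0
\]
as $n,m\to\infty$, since $\widehat{K_{n+}}$ converges uniformly. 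Thus $(\mathcal{H}_{K_n,a})$ is a Cauchy sequence in $\mathcal{A}_a$, and by the assumed completeness it converges in the operator norm to some $\mathcal{H}_{L,a}\in\mathcal{A}_a$. Applying Lemma~\ref{lm1} once more, $\Phi_n=\mathrm{Smb}(\mathcal{H}_{K_n,a})$ converges in $\|\cdot\|$ to $\mathrm{Smb}(\mathcal{H}_{L,a})$, and in particular $\widehat{K_{n+}}\to\widehat{L_+}$ uniformly on $\mathbb{R}$, with $\widehat{L_+}\in W_0(\mathbb{R})$ because $L_+\in L^1(\mathbb{R})$. By uniqueness of uniform limits, $g=\widehat{L_+}\in W_0(\mathbb{R})$, contradicting the choice of $g$. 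Therefore $\mathcal{A}_a$ is not complete, i.e. not Banach.
\end{proof}
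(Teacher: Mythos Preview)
Your proof is correct and follows the same route as the paper: the paper states the corollary with no separate proof, treating it as immediate from Theorem~\ref{thm1} (a non-closed subspace of the Banach space $\mathcal{L}(L^2(\mathbb{R}))$ cannot be complete), and your argument simply unwinds this by recasting the non-closedness proof of Theorem~\ref{thm1} as an explicit Cauchy-sequence argument. Your choice $K_{n-}=0$ and the concrete formula for $K_n$ make the construction cleaner than the paper's sketch, but the underlying idea is identical.
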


The particular case $a(u)=\frac{1}{u}$ reduces to

{\bf Example 1}. The set
\[
\mathcal{A}:=\{\mathcal{H}_K:K(u)|u|^{\frac{1}{2}}\in L^1(\mathbb{R})\}
\]
is a non-closed  commutative subalgebra of $\mathcal{L}(L^2(\mathbb{R}))$ without unit.

\section{Functions of Hausdorff operators}

In the sequel, let $\sigma(\mathcal{H}_{K,a})$ denote the spectrum of $\mathcal{H}_{K,a}$ in $L^2(\mathbb{R})$.

\begin{theorem} Let $\mathcal{H}_{K,a}\in \mathcal{A}_a$. If a function $F$ is holomorphic in the neighborhood $N$ of the
 set  $\sigma(\mathcal{H}_{K,a})\cup\{0\}$ and $F(0)=0$, then $F(\mathcal{H}_{K,a})\in \mathcal{A}_a$.
\end{theorem}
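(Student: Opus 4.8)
The plan is to use the holomorphic functional calculus on the Banach algebra $\mathcal{L}(L^2(\mathbb{R}))$ to make sense of $F(\mathcal{H}_{K,a})$, and then to identify the result as a Hausdorff operator by working at the level of symbols. Write $T=\mathcal{H}_{K,a}$ and $\Phi=\mathrm{Smb}(T)$. Since $F$ is holomorphic on a neighborhood $N$ of $\sigma(T)\cup\{0\}$ with $F(0)=0$, we may write $F(z)=zG(z)$ with $G$ holomorphic on $N$ (shrinking $N$ if necessary so that it is a neighborhood of the compact set $\sigma(T)\cup\{0\}$ on which this factorization is valid). Then $F(T)=TG(T)$, where $G(T)$ is defined by the usual Riesz--Dunford integral $G(T)=\frac{1}{2\pi i}\int_{\gamma}G(\lambda)(\lambda I-T)^{-1}\,d\lambda$ over a contour $\gamma$ in $N$ surrounding $\sigma(T)$. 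The point of isolating the factor $T$ is that $\mathcal{A}_a$ has no unit, so one cannot hope that $G(T)\in\mathcal{A}_a$; but $F(T)=TG(T)$ has a chance, because multiplying a bounded operator by an element of the ideal-like set $\mathcal{A}_a$ might land back in $\mathcal{A}_a$.

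The key step is therefore to show that $\mathcal{A}_a$ is closed under multiplication by the operators $(\lambda I-T)^{-1}$ for $\lambda\notin\sigma(T)\cup\{0\}$, and more precisely that $T(\lambda I-T)^{-1}\in\mathcal{A}_a$ with good control on its norm and on the norm of its symbol. Here I would pass to symbols. The symbol map sends $T$ to the matrix $\Phi(s)=\begin{pmatrix}\varphi_+(s)&\varphi_-(s)\\\varphi_-(s)&\varphi_+(s)\end{pmatrix}$ with $\varphi_\pm=\widehat{K_\pm}\in C_0(\mathbb{R})$, and by Lemma \ref{lm1} and Theorem \ref{thm1} it is an isometric multiplicative (non-surjective) embedding of $\mathcal{A}_a$ into $\mathrm{Mat}_2(C_0(\mathbb{R}))$. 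A $2\times 2$ circulant matrix is diagonalized by the fixed unitary $\frac{1}{\sqrt2}\begin{pmatrix}1&1\\1&-1\end{pmatrix}$, with eigenvalues $\varphi_+(s)\pm\varphi_-(s)=\widehat{K_+}(s)\pm\widehat{K_-}(s)$; call these $\mu_\pm(s)$, both in $C_0(\mathbb{R})$. The spectrum of $T$ is then governed by the ranges of $\mu_\pm$ together with $0$ (the $C_0$ functions attain values near $0$ at infinity, which is exactly why $0\in\sigma(T)$ — this is the structural reason $F(0)=0$ is needed). Applying $F$ spectrally amounts to replacing $\mu_\pm(s)$ by $F(\mu_\pm(s))$; the claim $F(T)\in\mathcal{A}_a$ becomes the claim that the functions $F(\mu_\pm(\cdot))$ are again of the form $\widehat{(\,\cdot\,)_+}\pm\widehat{(\,\cdot\,)_-}$ for an admissible kernel $Q$, i.e. that after undoing the circulant diagonalization we get a matrix lying in $\mathrm{Smb}(\mathcal{A}_a)$.

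The way to verify this without reconstructing $Q$ by hand is to expand $G$ in a power series. On a disc (or, by a partition argument, on each component of $N$) write $G(z)=\sum_{n\ge 0}c_n z^n$, so that $F(z)=\sum_{n\ge 1}c_{n-1}z^n$ converges for $|z|<R$ with $R>\|T\|=\|\Phi\|$ when $\sigma(T)\cup\{0\}$ lies in that disc; in the general (possibly disconnected) case one decomposes $F$ as a finite sum of such pieces plus, by runge-type approximation, rational functions with poles off $\sigma(T)\cup\{0\}$, and handles the rational pieces via the resolvent identity, reducing everything to showing $T(\lambda I - T)^{-1}\in \mathcal{A}_a$. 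For the power-series piece, $F(T)=\sum_{n\ge 1}c_{n-1}T^n$ converges in operator norm; since $T^n\in\mathcal{A}_a$ for each $n\ge 1$ by Theorem \ref{thm1}, and since on symbols $\mathrm{Smb}(T^n)=\Phi^n$ with eigenvalues $\mu_\pm(s)^n$, the partial sums $S_N=\sum_{n=1}^N c_{n-1}T^n$ lie in $\mathcal{A}_a$ and their symbols converge uniformly (because $\|\mathrm{Smb}(S_N)-\mathrm{Smb}(S_M)\|=\|S_N-S_M\|$ by isometry) to the matrix with eigenvalues $F(\mu_\pm(s))$. The remaining issue, and the main obstacle, is exactly the one flagged already in Theorem \ref{thm1}: $\mathrm{Smb}(\mathcal{A}_a)$ is \emph{not} closed, so a uniform limit of symbols of Hausdorff operators need not be a symbol. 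To close this gap I would show that the limit symbol is not merely a uniform limit but actually lies in the image: the eigenvalue functions are $F(\mu_\pm(s))=\mu_\pm(s)G(\mu_\pm(s))$, a \emph{product} of $\mu_\pm\in\widehat{L^1}$ (the Wiener algebra on $\mathbb{R}$, after the logarithmic change of variable) with the bounded function $G(\mu_\pm(s))$ that is itself a uniform limit of polynomials in $\mu_\pm$; one shows that $s\mapsto F(\mu_\pm(s))$ again lies in the Wiener algebra $W_0(\mathbb{R})$, using that $W_0(\mathbb{R})$ is a Banach algebra under pointwise multiplication and is closed under holomorphic functions vanishing at $0$ of its elements (Wiener--Lévy theorem: if $g\in W_0$ and $H$ is holomorphic near the closure of the range of $g$ together with $0$, with $H(0)=0$, then $H\circ g\in W_0$). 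Translating back through the circulant diagonalization and the change of variables $t\mapsto \pm e^{-t}$ produces the admissible kernel $Q$ with $Q_\pm\in L^1(\mathbb{R})$, hence $F(T)=\mathcal{H}_{Q,a}\in\mathcal{A}_a$. So the heart of the proof is an application of the Wiener--Lévy theorem on $W_0(\mathbb{R})$, and the role of the hypothesis $F(0)=0$ is to make that theorem applicable (a $C_0$-symbol forces $0$ into the picture, and only functions killing $0$ can return a $C_0$, indeed $W_0$, symbol).
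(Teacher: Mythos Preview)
Your argument is correct in its essentials, but the route differs from the paper's. The paper never diagonalizes the circulant symbol; instead it computes $F(\Phi)$ by the Riesz--Dunford integral and obtains entries of the form $F_1(\varphi_-,\varphi_+)$, $F_2(\varphi_-,\varphi_+)$ for explicit functions $F_1,F_2$ of \emph{two} complex variables, and then invokes the \emph{multivariate} holomorphic functional calculus for the pair $(K_-,K_+)$ in the unitization $\mathcal{V}=L^1(\mathbb{R})\oplus\mathbb{C}e$, checking that $F_1,F_2$ are holomorphic on a neighborhood of the joint spectrum $\sigma_{\mathcal{V}}(K_-,K_+)$ and that $F_1(0,0)=F_2(0,0)=0$ so that the resulting elements lie in $L^1$ rather than merely in $\mathcal{V}$. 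Your approach exploits the circulant structure to reduce to the scalar eigenvalue functions $\mu_\pm=\varphi_+\pm\varphi_-\in W_0(\mathbb{R})$ and then applies the \emph{one}-variable Wiener--L\'evy theorem to each; since $\overline{\mu_\pm(\mathbb{R})}\cup\{0\}\subset\sigma(T)\cup\{0\}$ and $F(0)=0$, this gives $F\circ\mu_\pm\in W_0(\mathbb{R})$, and undoing the diagonalization yields $Q_\pm\in L^1$. This is more elementary and cleaner for the specific $2\times 2$ circulant situation; the paper's two-variable argument is heavier but would adapt more readily to symbol matrices without a simultaneous diagonalization. One remark: your intermediate discussion of power series, Runge approximation, and showing $T(\lambda I-T)^{-1}\in\mathcal{A}_a$ is unnecessary scaffolding once you invoke Wiener--L\'evy directly, and is not fully worked out (the non-closedness of $\mathcal{A}_a$ that you flag would indeed block the naive limit argument); you can safely excise it and go straight from the diagonalization to Wiener--L\'evy.
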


\begin{proof} Let $\Phi=\mathrm{Smb}(\mathcal{H}_{K,a})$. Then $\mathcal{H}_{K,a}=\mathcal{U}^{-1}M_{\Phi}\,\mathcal{U}$,
where $\mathcal{U}$ ia a unitary operator taking the space $L^2(\mathbb{R}_-)\times L^2(\mathbb{R}_+)$ (which is isomorphic to $L^2(\mathbb{R})$)
into $L^2(\mathbb{R})\times L^2(\mathbb{R})=L^2(\mathbb{R},\mathbb{C}^2)$ \cite{faa}. Moreover, the spectrum of $\mathcal{H}_{K,a}$ equals to
the spectrum of $\Phi$ in the matrix algebra $\mathrm{Mat}_2(C_0(\mathbb{R}))$ \cite{Forum}. This implies (see, e.~g., \cite{DunSw})

\begin{align*}
F(\mathcal{H}_{K,a})&=\frac{1}{2\pi i}\int_{\Gamma} F(\lambda)(\lambda-\mathcal{H}_{K,a})^{-1}d\lambda\\
&=\frac{1}{2\pi i}\int_{\Gamma} F(\lambda)(\lambda-\mathcal{U}^{-1}M_{\Phi}\mathcal{U})^{-1}d\lambda\\
&=\mathcal{U}\left(\frac{1}{2\pi i}\int_{\Gamma} F(\lambda)(\lambda-M_{\Phi})^{-1}d\lambda\right)\mathcal{U}^{-1}\\
&=\mathcal{U}F(M_{\Phi})\mathcal{U}^{-1}=\mathcal{U}M_{F(\Phi)}\mathcal{U}^{-1},
\end{align*}
where  $\Gamma$ is the boundary of any open neighborhood $U$ of the  set  $\sigma(\mathcal{H}_{K,a})\cup\{0\}$ such that $N$ contains its closure. 
To finish the proof, it remains to show that $F(\Phi)$ is the symbol of an operator in $\mathcal{A}_a$. For all regular $\lambda$, we have
\[
(\lambda-\Phi)^{-1}=\frac{1}{\Delta}\begin{pmatrix} \lambda-\varphi_+&-\varphi_-\\ -\varphi_-&\lambda-\varphi_+
 \end{pmatrix},
\]
where $\Delta:=(\lambda-\varphi_+(s))^2-\varphi_-(s)^2\ne 0$ for all $s\in \mathbb{R}$.
Then

\begin{align*}
F(\Phi)&=\frac{1}{2\pi i}\int_{\Gamma} F(\lambda)(\lambda-\Phi)^{-1}d\lambda\\
&=\frac{1}{2\pi i}\int_{\Gamma} F(\lambda)\begin{pmatrix} \frac{\lambda-\varphi_+}{\Delta}&\frac{-\varphi_-}{\Delta}\\
\frac{-\varphi_-}{\Delta}&\frac{\lambda-\varphi_+}{\Delta}   \end{pmatrix}d\lambda\\
&=\begin{pmatrix} \frac{1}{2\pi i}\int_{\Gamma} F(\lambda)\frac{\lambda-\varphi_+}{\Delta}d\lambda&\frac{1}{2\pi i}\int_{\Gamma}
F(\lambda)\frac{-\varphi_-}{\Delta}d\lambda\\
\frac{1}{2\pi i}\int_{\Gamma} F(\lambda)\frac{-\varphi_-}{\Delta}d\lambda&\frac{1}{2\pi i}\int_{\Gamma} F(\lambda)\frac{\lambda-\varphi_+}{\Delta}d\lambda
 \end{pmatrix}\\
 &= \begin{pmatrix} F_1(\varphi_-,\varphi_+)&F_2(\varphi_-,\varphi_+)\\ F_2(\varphi_-,\varphi_+)&F_1(\varphi_-,\varphi_+)
 \end{pmatrix},
\end{align*}
where
\[
F_1(z_1,z_2):=\frac{1}{2\pi i}\int_{\Gamma} F(\lambda)\frac{\lambda-z_2}{(\lambda-z_2)^2-z_1^2}d\lambda,
\]
and
\[
F_2(z_1,z_2):=\frac{-z_1}{2\pi i}\int_{\Gamma} \frac{F(\lambda)}{(\lambda-z_2)^2-z_1^2}d\lambda.
\]

Functions $\varphi_{\pm}=\widehat{K_{\pm}}$ belong to $ W_0(\mathbb{R})$. We are going to prove  that $F_1(\varphi_-,\varphi_+)$ and $F_2(\varphi_-,\varphi_+)$ 
belong to $ W_0(\mathbb{R})$, too. To this end, we employ the functional calculus of several elements of a commutative  Banach algebra with unit (see \cite[\S 13]{GRS} 
or, e.~g., \cite[Ch. III, \S 4]{Gamelin}).  However, the convolution algebra $L^1(\mathbb{R})$ is a  commutative  Banach algebra without unit. Let
\[
\mathcal{V}:=\{\frak{z}=\lambda e +f: \lambda\in \mathbb{C}, f\in L^1(\mathbb{R})\}
\]
be a Banach algebra obtained by the formal adjunction of a unit element $e$ to $ L^1(\mathbb{R})$ (see \cite[\S 16]{GRS}). Each non-zero  complex homomorphism of
 $\mathcal{V}$ is $\psi_s(\lambda e +f)=\widehat{f}(s)$, where $s\in \mathbb{R}$ or $\psi_\infty(\lambda e +f)=\lambda$  \cite[\S 17]{GRS}. In particular, 
 $\frak{z}\in L^1(\mathbb{R})$  if and only if $\psi_\infty(\frak{z})=0.$  We denote by
 $\mathrm{Spec}(\mathcal{V})$ the  Gelfand spectrum (the space of all non-zero  complex homomorphisms) of the algebra $\mathcal{V}$. The map
 $\mathcal{G}\frak{z}(\psi):=\psi(\frak{z})$ ($\psi\in \mathrm{Spec}(\mathcal{V}$)) is  called the Gelfand transform. Then
 \[
 \mathcal{G}(\lambda e +f)=\lambda+\widehat{f}.
 \]
The  joint spectrum of elements $K_{\pm}$ of the commutative  Banach algebra $\mathcal{V}$ is
\begin{align*}
\sigma_{\mathcal{V}}(K_-,K_+)&:=\{(\mathcal{G}K_-)(\psi),  (\mathcal{G}K_+)(\psi):\psi\in \mathrm{Spec}(\mathcal{V})\}\\
&=\{(\widehat{K_-}(s),\widehat{K_+}(s)):s\in \mathbb{R}\}\cup\{(0,0)\}\\
&= \{(\varphi_-(s),\varphi_+(s)):s\in \mathbb{R}\}\cup\{(0,0)\}.
\end{align*}
We claim  that the functions $F_1(z_1,z_2)$ and $F_2((z_1,z_2))$ are holomorphic in a neighborhood of $\sigma_{\mathcal{V}}(K_-,K_+)$. Indeed,
it is known \cite[Theorem 2]{faa} that
$$
\sigma(\mathcal{H}_{K,a})=\mathrm{cl}(\varphi(\mathbb{R})\cup \varphi^*(\mathbb{R})),
$$
where $\varphi=\varphi_++\varphi_-$, $\varphi^*=\varphi_+-\varphi_-$.
It follows that for all $\lambda\in \Gamma$ and $z_1\in \mathrm{cl}(\varphi_-(\mathbb{R}))$,
$z_2\in \mathrm{cl}(\varphi_+(\mathbb{R}))$, we have $(\lambda-z_2)^2-z_1^2\ne 0$ (since  $\lambda\ne z_2\pm z_1$). Therefore,
\[
\min\{|(\lambda-z_2)^2-z_1^2|: \lambda\in \Gamma, (z_1,z_2)\in \sigma_{\mathcal{V}}(K_-,K_+)\}>0,
\]
and thus both functions $F_1$ and $F_2$ are holomorphic on some neighborhood of the  joint spectrum. 
The functional calculus in commutative  Banach algebras implies that there are $Q_{\pm}\in \mathcal{V}$ such that 
$F_{1,2}(\widehat{K_-}, \widehat{K_+})=\mathcal{G}Q_{\pm}$, respectively.

Observing that $F_1(0,0)=F_2(0,0)=0$, we conclude (see, e.~g., \cite[p. 78, Theorem 4.5]{Gamelin}) that 
$\psi_\infty(Q_{\pm})=F_{1,2}(\psi_\infty(K_-), \psi_\infty(K_+))=0$, and so $Q_{\pm}\in L^1(\mathbb{R})$. It follows that
  $F_1(\varphi_-,\varphi_+)=\widehat{Q_-}\in W_0(\mathbb{R})$ and $ F_2(\varphi_-,\varphi_+)=\widehat{Q_+}\in W_0(\mathbb{R})$.
If the function $Q$ on $\mathbb{R}$ is given by  (\ref{Q}), then
   $F(\Phi)=\mathrm{Smb}(\mathcal{H}_{Q,a})$, as desired.
\end{proof}

\section*{Acknowledgments} The authors thank the anonymous referees for their very useful comments and suggestions that improve the presentation.

\medskip

\section*{Funding}

The authors declares that no funds, grants, or other support were received during the preparation of this manuscript.

\medskip

\section*{Competing Interests}

The authors have no relevant financial or non-financial interests to disclose.

\medskip

\section*{Data availability}

This manuscript has no associated data.

\bigskip

\end{document}